\author[P.~Leonetti]{Paolo Leonetti}
\address{Universit\`a ``Luigi Bocconi''\\Department of Statistics\\Milan, Italy}
\email{leonetti.paolo@gmail.com}
\urladdr{\url{https://sites.google.com/site/leonettipaolo/}}
\keywords{Ideal limit point, Erd{\H o}s--Ulam ideal, analytic P-ideal, thinnable ideal, ideal convergence.}
\subjclass[2010]{Primary: 40A35. Secondary: 40A05, 11B05, 54A20.}
\title{Invariance of Ideal Limit Points}
   \def\MR#1{}
\newtheorem{thm}{Theorem}[section]
\newtheorem{cor}[thm]{Corollary}%[section]
\newtheorem{lem}[thm]{Lemma}
\newtheorem{prop}[thm]{Proposition}
\theoremstyle{definition} 
\newtheorem{defi}[thm]{Definition}%[section]
\let\olddefi\defi
\renewcommand{\defi}{\olddefi\normalfont}
\newtheorem{example}[thm]{Example}
\let\oldexample\example
\renewcommand{\example}{\oldexample\normalfont}
\let\oldrmk\rmk
\renewcommand{\rmk}{\oldrmk\normalfont}
\providecommand{\MR}[1]{}
\providecommand{\MR}{\relax\ifhmode\unskip\space\fi MR }
\providecommand{\href}[2]{#2}
\begin{document}

\maketitle
\thispagestyle{empty}

\begin{abstract}
\noindent 
Let $\mathcal{I}$ be an analytic P-ideal [respectively, a summable ideal] on the positive integers and let $(x_n)$ be a sequence taking values in a metric space $X$. First, it is shown that the set of ideal limit points of $(x_n)$ is an $F_\sigma$-set [resp., a closet set]. 

Let us assume that $X$ is also separable and the ideal $\mathcal{I}$ satisfies certain additional assumptions, which however includes several well-known examples, e.g., the collection of sets with zero asymptotic density, sets with zero logarithmic density, and some summable ideals. Then, it is shown that the set of ideal limit points of $(x_n)$ is equal to the set of ideal limit points of almost all its subsequences.
\end{abstract}
% (in the sense of Lebesgue measure).

\section{Introduction}\label{sec:intro}

Let $\mathcal{I}$ be an ideal on the positive integers $\mathbf{N}$, i.e., a collection of subsets of $\mathbf{N}$ closed under taking finite unions and subsets. It is assumed that $\mathcal{I}$ contains all finite subsets of $\mathbf{N}$ and it is different from the whole power set $\mathcal{P}(\mathbf{N})$. %Note that the family $\mathcal{I}_0$ of subsets with zero asymptotic density is an ideal, cf. Section \ref{sec:preliminaries}. 
Note that the family of subsets with zero asymptotic density
$$
\mathcal{I}_0:=\left\{S\subseteq \mathbf{N}: |S \cap [1,n]|=o(n) \,\text{ as }n\to \infty\right\}
$$
is an ideal, cf. Section \ref{sec:preliminaries}. 

Let $X$ be a topological space, which will be always assumed to be Hausdorff. Given an $X$-valued sequence $x=(x_n)$, we 
%Let also $x=(x_n)$ be a sequence taking values in a topological space $X$. Hereafter, it will be always assumed that $X$ is Hausdorff. We 
denote by $\Lambda_x(\mathcal{I})$ the set of $\mathcal{I}$\emph{-limit points} of $x$, that is, the set of all $\ell \in X$ such that 
$$
\lim_{k\to \infty}x_{n_k}=\ell,
$$
for some subsequence $(x_{n_k})$ such that $\{n_k: k \in \mathbf{N}\} \notin \mathcal{I}$. Statistical limit points (i.e., $\mathcal{I}_{0}$-limit points) of real sequences were introduced by Fridy \cite{MR1181163}, cf. also \cite{MR1372186, MR2463821, MR1416085, MR1838788} and references therein.%, LMxyz}. %PaoloMarek17

An old result of Buck \cite{MR0009997} states that the set of ordinary limit points of ``almost every'' subsequence of a real sequence $(x_n)$ coincides with the set of ordinary limit points of the original sequence, in the sense of Lebesgue measure. The aim of this article is to extend this result to the setting of ideal limit points. 
%The main question addressed here is to find suitable conditions on $X$ and $\mathcal{I}$ such that the set of $\mathcal{I}$-limit points of a sequence $(x_n)$ is equal to the set of $\mathcal{I}$-limit points of ``almost all'' its subsequences. 

Its analogue for ideal cluster points has been recently obtained in \cite{Leo17} (where we recall that $\ell \in X$ is an $\mathcal{I}$-cluster point of $x$ provided that $\{n: x_n \in U\} \notin \mathcal{I}$ for all neighborhoods $U$ of $\ell$). Related results can be found in \cite{MR3568092, MR0316930, MR1260176, MR1924673}.%MR0010616,

%%%%%%%%%%%%%%%%%%%%%%%%%%%%%%%%%%%%%%%%%%%%%%%%%%%%%%%%%%%%%%%%%

\section{Preliminaries}\label{sec:preliminaries}

We recall that an ideal $\mathcal{I}$ is said to be a \emph{P-ideal} if for every sequence $(A_n)$ of sets in $\mathcal{I}$ there exists $A \in \mathcal{I}$ such that $A_n\setminus A$ is finite for all $n$; equivalent definitions were given, e.g., in \cite[Proposition 1]{MR2285579}.

By identifying sets of integers with their characteristic function, we equip 
%the power set
$\mathcal{P}(\mathbf{N})$ with the Cantor-space topology and therefore we can assign the topological complexity to the ideals on $\mathbf{N}$. In particular, an ideal $\mathcal{I}$ is \emph{analytic} if it is a continuous image of a $G_\delta$-subset of the Cantor space. 
Moreover, a map $\varphi: \mathcal{P}(\mathbf{N}) \to [0,\infty]$ is a \emph{lower semicontinuous submeasure} provided that:
%\begin{enumerate*}
%  \item 
  (i) $\varphi(\emptyset)=0$; 
  %  \item 
  (ii) $\varphi(\{n\}) <\infty$ for all $n\in \mathbf{N}$; 
%  \item 
  (iii) $\varphi(A) \le \varphi(B)$ whenever $A\subseteq B$;
%  \item 
  (iv) $\varphi(A\cup B) \le \varphi(A)+\varphi(B)$ for all $A,B$; and
%  \item 
  (v) $\varphi(A)=\lim_{n}\varphi(A\cap \{1,\ldots,n\})$ for all $A$.
%\end{enumerate*}

By a classical result of Solecki, an ideal $\mathcal{I}$ is an analytic P-ideal if and only if there exists a lower semicontinuous submeasure $\varphi$ such that 
\begin{equation}\label{eq:analyticPideal}
\mathcal{I}=\mathcal{I}_\varphi:=\{A\subseteq \mathbf{N}: \|A\|_\varphi =0\},
\end{equation}
where $\|A\|_\varphi:=\lim_n \varphi(A\setminus \{1,\ldots,n\})$ for all $A\subseteq \mathbf{N}$, cf e.g. \cite[Theorem 1.2.5]{MR1711328}. Hereafter, unless otherwise stated, an analytic P-ideal will be always denoted by $\mathcal{I}_\varphi$, where $\varphi$ stands for the associated lower semicontinuous submeasure as in \eqref{eq:analyticPideal}. 

Lastly, given $k \in \mathbf{N}$ and \emph{infinite} sets $A,B\subseteq \mathbf{N}$ with canonical enumeration $\{a_n: n \in \mathbf{N}\}$ and $\{b_n: n \in \mathbf{N}\}$, respectively, we write $A\le B$ if $a_n\le b_n$ for all $n \in \mathbf{N}$ and define
$$
A_B:=\{a_b: b \in B\} \,\,\text{ and }\,\,kA:=\{ka: a \in A\}.
$$

At this point, we recall the definition of thinnability given in \cite[Definition 2.1]{Leo17}.
\begin{defi}\label{def:thinn0}
An ideal $\mathcal{I}$ is said to be \emph{weakly thinnable} if $A_B \notin \mathcal{I}$ whenever $A\subseteq \mathbf{N}$ admits non-zero asymptotic density and $B\notin \mathcal{I}$.

If, in addition, also $B_A\notin \mathcal{I}$ and $X\notin \mathcal{I}$ whenever $X \le Y$ and $Y\notin \mathcal{I}$, then $\mathcal{I}$ is said to be \emph{thinnable}.

As it has been shown in \cite[Proposition 2.3]{Leo17}, the class of thinnable ideals are quite rich and include well-known examples, e.g., the collection of sets with zero asymptotic density, sets with zero logarithmic density, and some summable ideals. 
Moreover, in the special case of analytic P-ideals, we define also strong thinnability:

\end{defi}
\begin{defi}\label{def:strongthinn}
An analytic P-ideal $\mathcal{I}_\varphi$ is said to be \emph{strongly thinnable} if: 
\begin{enumerate}[label=(\roman*)]
\item \label{item1} $\mathcal{I}_\varphi$ is weakly thinnable; 
\item \label{item2} given $q>0$ and a set $A\subseteq \mathbf{N}$ with asymptotic density $a >0$, there exists $c=c(q,a)>0$ such that $\|B_A\|_\varphi \ge cq$ whenever $\|B\|_\varphi\ge q$;
\item \label{item3} there exists $c>0$ such that $\|X\|_\varphi\ge c\|Y\|_\varphi$ whenever $X \le Y$. 
\end{enumerate}
\end{defi}

A moment thought reveals that strongly thinnability is just a refinement of thinnability, considering that $\|\cdot\|_\varphi$ allows us to quantify the ``largeness'' of subsets of $\mathbf{N}$.

\begin{prop}\label{thm:thinnableold}
Let $f: \mathbf{N} \to (0,\infty)$ be a definitively non-increasing function such that $\sum_{n\ge 1}f(n)=\infty$. In addition, suppose that 
\begin{equation}\label{eq:addlimit}
\liminf_{n\to \infty}\frac{\sum_{i \in [1,n]}f(i)}{\sum_{i \in [1,kn]}f(i)}\neq 0\,\,\,\text{ for all }k \in \mathbf{N}
\end{equation} 
and define the ideal
$$
\mathscr{E}_f:=\left\{S\subseteq \mathbf{N}: \lim_{n\to \infty}\frac{\sum_{i \in S\cap [1,n]}f(i)}{\sum_{i \in [1,n]}f(i)}=0\right\}.
$$
Then, $\mathscr{E}_f$ is a strongly thinnable analytic P-ideal 
%provided $\|X\|_\varphi \ge \|Y\|_\varphi$ whenever $X\le Y$.
provided that $\mathscr{E}_f$ is strechable, i.e., $kA\notin \mathscr{E}_f$ for all $k \in \mathbf{N}$ and $A\notin \mathscr{E}_f$.
\end{prop}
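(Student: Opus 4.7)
The plan is to exhibit an explicit lower semicontinuous submeasure $\varphi$ representing $\mathscr{E}_f$, then verify the three clauses of Definition \ref{def:strongthinn} in turn. Writing $F(n):=\sum_{i=1}^n f(i)$, I would set
$$
\varphi(A):=\sup_{n\ge 1}\frac{1}{F(n)}\sum_{i\in A\cap[1,n]}f(i).
$$
The submeasure axioms are routine, and lower semicontinuity uses only $F(n)\to\infty$ (guaranteed by $\sum f=\infty$). A direct computation---subtracting a finite initial segment and dividing by $F(n)\to\infty$---gives
$$
\|A\|_\varphi=\limsup_n\frac{1}{F(n)}\sum_{i\in A\cap[1,n]}f(i),
$$
so $\mathcal{I}_\varphi=\mathscr{E}_f$ and Solecki's theorem gives that $\mathscr{E}_f$ is an analytic P-ideal.

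Clause \ref{item3} is immediate from monotonicity: if $X\le Y$, then $f(x_k)\ge f(y_k)$ for $k$ large (where $f$ is non-increasing), whence $\sum_{i\in X\cap[1,n]}f(i)\ge\sum_{i\in Y\cap[1,n]}f(i)-O(1)$ and $\|X\|_\varphi\ge\|Y\|_\varphi$, so $c=1$ works. For clause \ref{item1}, I would fix $A$ of density $a>0$, $B\notin\mathscr{E}_f$, and an integer $M>1/a$. Since $a_k/k\to 1/a$, we have $a_k\le Mk$ eventually, so $a_{b_k}\le Mb_k$ and $f(a_{b_k})\ge f(Mb_k)$, yielding
$$
\sum_{i\in A_B\cap[1,n]}f(i)\ge\sum_{i\in MB\cap[1,n]}f(i)-O(1).
$$
The stretchability hypothesis gives $MB\notin\mathscr{E}_f$, hence $A_B\notin\mathscr{E}_f$.

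The main technical step is clause \ref{item2}. Given $A$ of density $a$ and $\|B\|_\varphi\ge q$, I would choose $n_k\to\infty$ along which $F(n_k)^{-1}\sum_{i\in B\cap[1,n_k]}f(i)\to\ell\ge q$, and set $m_k:=|B\cap[1,n_k]|$, so $b_{m_k}\le n_k$. Specializing the defining limsup of $\|B_A\|_\varphi$ to $n=b_{m_k}$ rests on the identity
$$
\sum_{i\in B_A\cap[1,b_{m_k}]}f(i)=\sum_{j\in A\cap[1,m_k]}f(b_j),
$$
which reduces the problem to estimating a weighted partial sum over $A$ of the eventually non-increasing sequence $g(j):=f(b_j)$. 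An Abel summation using $|A\cap[1,m]|=(a+o(1))m$ then gives
$$
\sum_{j\in A\cap[1,m]}g(j)\ge(a-\varepsilon)\sum_{j=1}^m g(j)-O(1).
$$
Dividing by $F(b_{m_k})\le F(n_k)$ and letting $k\to\infty$ yields $\|B_A\|_\varphi\ge a\ell\ge aq$, so one may take $c(q,a):=a$. The main obstacle I anticipate is controlling the $O(1)$ Abel error after division by $F$; this succeeds because $\|B\|_\varphi\ge q>0$ forces $\sum_{j=1}^{m_k}g(j)\sim\ell F(n_k)\to\infty$, rendering the correction negligible in the limit.
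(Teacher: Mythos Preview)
Your argument is correct and takes a genuinely different route from the paper's. The paper works with Farah's block-based representation: it fixes an auxiliary sequence $(z_n)$ satisfying $\sum_{s\in(z_n,z_{n+1}]}f(s)\sim\sum_{s\in[1,z_n]}f(s)$ and identifies $\|S\|_\varphi$ with $\limsup_n \sum_{s\in S\cap(z_n,z_{n+1}]}f(s)\big/F(z_{n+1})$. Clause~\ref{item2} is then obtained by splitting each block into residue classes modulo $r=\lfloor 1/a\rfloor+1$, yielding $c(q,a)=1/r$, and clause~\ref{item3} by a geometric-series comparison across blocks, giving $c=1/6$. You instead take $\|S\|_\varphi$ to be the plain upper $f$-density; this makes \ref{item3} immediate from the eventual monotonicity of $f$ with the optimal constant $c=1$, and reduces \ref{item2} to an Abel summation on the non-increasing sequence $g(j)=f(b_j)$, giving the sharper $c(q,a)=a$. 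Your treatment of \ref{item1}---comparing $A_B$ with $MB$ via the argument for \ref{item3} and then invoking stretchability---is also more direct than the paper's appeal to \cite[Proposition 2.3]{Leo17}. A pleasant by-product is that your proof never uses hypothesis~\eqref{eq:addlimit}, whereas the paper's reference for \ref{item1} does.
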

\begin{proof}
Note that $\mathscr{E}_f$ is a Erd{\H o}s--Ulam ideal, indeed $f(n)=o(f(1)+\cdots+f(n))$ as $n\to \infty$ since $f$ is non-increasing, cf. \cite[Section 1.13]{MR1711328}; hence $\mathscr{E}_f$ contains all the finite sets. Moreover, the weak thinnability of $\mathscr{E}_f$, i.e., property \ref{item1}, has been shown in \cite[Proposition 2.3]{Leo17} (its proof relies on the hypotheses \eqref{eq:addlimit} and strechability).

Let $\varphi$ be a lower semicontinuous submeasure associated with $\mathscr{E}_f$. Then, it follows from the proof of \cite[Theorem 1.13.3]{MR1711328} that there exists a strictly increasing sequence of positive integers $(z_n)$ such that 
%$z_{k+1}-z_{k}\to \infty$
\begin{equation}\label{eq:conditionzk}
\lim_{n\to \infty}\frac{\sum_{s \in (z_n,z_{n+1}]}f(s)}{\sum_{s \in [1,z_n]}f(s)}=1
\end{equation}
and $\|S\|_\varphi=\lim_{n\to \infty}g_n(S)$ for all $S\subseteq \mathbf{N}$, where
$$
g_n(S):=\sup_{k\in \mathbf{N}}\,\frac{\sum_{s \in S \cap (z_{k},z_{k+1}] \setminus \{1,\ldots,n\}}f(s)}{\sum_{s \in [1,z_k]}f(s)}.
$$
Considering that $g_n(S) \downarrow \|S\|_\varphi$, then also $g_{z_n}(S)\downarrow \|S\|_\varphi$. Hence
\begin{equation}\label{eq:varphidensity}
\begin{split}
\|S\|_\varphi&=\inf_{n\in \mathbf{N}}\,g_{z_n}(S)=\inf_{n\in \mathbf{N}}\,\sup_{k\in \mathbf{N}}\,\frac{\sum_{s \in S \cap (z_{k},z_{k+1}] \setminus \{1,\ldots,z_n\}}f(s)}{\sum_{s \in [1,z_k]}f(s)}\\
&=\inf_{n\in \mathbf{N}}\,\sup_{k\ge n}\,\frac{\sum_{s \in S \cap (z_{k},z_{k+1}]}f(s)}{\sum_{s \in [1,z_k]}f(s)}=\limsup_{n\to \infty}\,\frac{\sum_{s \in S \cap (z_{n},z_{n+1}]}f(s)}{\sum_{s \in [1,z_n]}f(s)}.
\end{split}
\end{equation}
Replacing $\varphi$ with $\frac{1}{2}\varphi$ (which is possible since $\mathcal{I}_\varphi=\mathcal{I}_{\frac{1}{2}\varphi}$), we obtain by \eqref{eq:conditionzk} and \eqref{eq:varphidensity} that
\begin{equation}\label{eq:Svarphifinal}
\|S\|_\varphi=\limsup_{n\to \infty}\,\frac{\sum_{s \in S \cap (z_{n},z_{n+1}]}f(s)}{\sum_{s \in [1,z_{n+1}]}f(s)}.
\end{equation}

At this point, fix a set $A\subseteq \mathbf{N}$ with canonical enumeration $\{a_n: n \in \mathbf{N}\}$ such that $A$ admits asymptotic density $a \in (0,1]$. Fix also a real $q>0$ and a set $B\subseteq \mathbf{N}$ with canonical enumeration $\{b_n: n \in \mathbf{N}\}$ such that $\|B\|_\varphi \ge q$. Set $r:=\lfloor 1/a\rfloor +1$. Then, it follows by \eqref{eq:Svarphifinal} that
%fix $q>0$ and let $B$ be a set of integers such that $\|B\|_\varphi \ge q$. Given $a \in (0,1]$, fix also a set $A$ with canonical enumeration $\{a_n: n \in \mathbf{N}\}$ such that $A$ admits asymptotic density $a$ and set $r:=\lfloor 1/a\rfloor +1$. Then, it follows by \eqref{eq:Svarphifinal} that
%$$
%\|B_A\|_\varphi=\limsup_{n\to \infty}\,\frac{\sum_{\substack{s = b_{a_n}\text{ for some } n, \\ z_{n-1}<s\le z_n}}f(s)}{\sum_{s \in [1,z_n]}f(s)}.
%$$
\begin{displaymath}
\begin{split}
\|B_A\|_\varphi&=\limsup_{n\to \infty}\,\frac{\sum_{z_{n}<b_{a_k}\le z_{n+1}}f(b_{a_k})}{\sum_{s \in [1,z_{n+1}]}f(s)}\\
&\ge \limsup_{n\to \infty}\,\frac{O(1)+\sum_{z_{n}<b_{a_k}\le z_{n+1}}f(b_{rk})}{\sum_{s \in [1,z_{n+1}]}f(s)}
\end{split}
\end{displaymath}
Hence, considering that by \eqref{eq:conditionzk} it holds $z_{n+1}-z_{n} \ge z_n \ge n \to \infty$ and 
\begin{displaymath}
\begin{split}
\sum_{\substack{s \in S \cap (z_{n},z_{n+1}], \\ s\equiv 0\bmod{r}}}f(s) &\ge O(1)+\sum_{\substack{s \in S \cap (z_{n},z_{n+1}], \\ s\equiv 1\bmod{r}}}f(s) \\
&\ge \cdots \ge O(1)+\sum_{\substack{s \in S \cap (z_{n},z_{n+1}], \\ s\equiv r-1\bmod{r}}}f(s) \ge O(1)+\sum_{\substack{s \in S \cap (z_{n},z_{n+1}], \\ s\equiv 0\bmod{r}}}f(s)
\end{split}
\end{displaymath}
for every $S \subseteq \mathbf{N}$, we obtain that
$$
\|B_A\|_\varphi \ge \limsup_{n\to \infty}\,\frac{\sum_{z_{n}<b_{a_k}\le z_{n+1}}f(b_{rk})}{\sum_{s \in [1,z_{n+1}]}f(s)} \ge \frac{\|B\|_\varphi}{r} \ge \frac{q}{r},
$$
which proves property \ref{item2}.

Finally, fix sets $X,Y\subseteq \mathbf{N}$ with $X\le Y$ and define
$$
h_n(X):=\frac{\sum_{s \in X \cap (z_n,z_{n+1}]}f(s)}{\sum_{s \in [1,z_{n+1}]}f(s)}%\,\,\text{ and }\,\,h_n(Y):=\frac{\sum_{s \in Y \cap (z_n,z_{n+1}]}f(s)}{\sum_{s \in [1,z_{n+1}]}f(s)}
$$
for each $n \in \mathbf{N}$, and similarly $h_n(Y)$. Since $\|Y\|_\varphi=\limsup_{n\to \infty}h_n(Y)$ by \eqref{eq:Svarphifinal}, it follows that there exists an infinite set $\mathcal{N}$ such that 
$
h_n(Y) \ge \frac{1}{2}\|Y\|_\varphi
$ 
for all $n \in \mathcal{N}$. Set also $\mu_n:=\sum_{s \in [1,z_{n+1}]}f(s)$ for each $n$. Note that the limit \eqref{eq:conditionzk} implies that $\mu_n \le \frac{2}{3}\mu_{n+1}$ for all sufficiently large $n$. Hence,
considering that $X\le Y$, we obtain
\begin{displaymath}
\begin{split}
h_n(Y)\mu_n&=\sum_{s \in Y \cap (z_n,z_{n+1}]}f(s) \le \sum_{s \in X \cap [1,z_{n+1}]}f(s)= \sum_{s \in X\cap [1,z_{1}]}f(s)+\sum_{i=1}^n h_i(X)\mu_i	\\
&\le z_1f(1)+\sum_{i=1}^n h_i(X)\mu_i \le O(1)+\mu_n\sum_{i=1}^n \left(\frac{2}{3}\right)^{n-i} h_i(X)
\end{split}
\end{displaymath}
for each $n \in \mathcal{N}$. Since $\mu_n \to \infty$ by hypothesis, it follows that 
$
h_n(Y) \le o(1)+\sum_{i=1}^n \left(\frac{2}{3}\right)^{n-i} h_i(X)
$ 
whenever $n \in \mathcal{N}$ is sufficiently large. Then 
$$
\|X\|_\varphi=\limsup_{n\to \infty} h_n(X) \ge \frac{1}{6}\|Y\|_\varphi;
$$
indeed, in the opposite, we would get
\begin{displaymath}
\begin{split}
\frac{1}{2}\|Y\|_\varphi \le h_n(Y) & \le o(1)+\sum_{i=1}^n \left(\frac{2}{3}\right)^{n-i} h_i(X)\\
&\le o(1)+\frac{1}{5}\|Y\|_\varphi\sum_{i=1}^n \left(\frac{2}{3}\right)^{n-i}<\frac{1}{2}\|Y\|_\varphi
\end{split}
\end{displaymath}
for each sufficiently large $n \in \mathcal{N}$. This proves property \ref{item3}, concluding the proof.
\end{proof}

For each real parameter $\alpha \ge -1$, let 
\begin{equation}\label{eq:Ialpha}
\mathcal{I}_\alpha:=\{S\subseteq \mathbf{N}: \mathrm{d}^\star_\alpha\,(S)=0\}
\end{equation}
be the ideal of subsets of zero $\alpha$-density, where
$$
\mathrm{d}_\alpha^\star: \mathcal{P}(\mathbf{N}) \to \mathbf{R}: S\mapsto \limsup_{n\to \infty}\frac{\sum_{i \in S\cap [1,n]} i^\alpha}{\sum_{i \in [1,n]} i^\alpha}
$$
denotes the upper $\alpha$-density on $\mathbf{N}$. Note that $\mathcal{I}_\alpha$ is a Erd{\H o}s--Ulam ideal. 

Recalling that every Erd{\H o}s--Ulam ideal is an analytic P-ideal, see e.g. 
%\cite[Theorem 1.13.3]{MR1711328}, 
\cite[Example 1.2.3.(d)]{MR1711328}, 
the following is immediate by Proposition \ref{thm:thinnableold} (we omit details):
\begin{cor}\label{cor:Ialpha}
$\mathcal{I}_\alpha$ is a strongly thinnable analytic P-ideal whenever $\alpha \in [-1,0]$.
\end{cor}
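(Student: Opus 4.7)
The plan is to apply Proposition \ref{thm:thinnableold} with $f(n) = n^\alpha$, so that $\mathscr{E}_f = \mathcal{I}_\alpha$ by definition. I first need to check the four hypotheses of that proposition: $f$ is (definitively) non-increasing, $\sum_{n\ge 1} f(n) = \infty$, the liminf condition \eqref{eq:addlimit} holds, and $\mathscr{E}_f$ is stretchable. The monotonicity is immediate since $\alpha \le 0$, and the divergence of the series is the standard fact for $\alpha \ge -1$.

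For the liminf condition, I would invoke the standard asymptotics $\sum_{i=1}^n i^\alpha \sim n^{\alpha+1}/(\alpha+1)$ when $\alpha \in (-1,0]$ and $\sum_{i=1}^n i^{-1} \sim \log n$ when $\alpha = -1$. In the first case the ratio in \eqref{eq:addlimit} tends to $k^{-(\alpha+1)} > 0$, and in the second case it tends to $1$; in both cases the liminf is positive, as required.

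The only nontrivial step is stretchability, namely showing that $kA \notin \mathcal{I}_\alpha$ whenever $A \notin \mathcal{I}_\alpha$ and $k \in \mathbf{N}$. The direct computation
\[
\sum_{j \in kA \cap [1,N]} j^\alpha \;=\; k^\alpha \sum_{a \in A \cap [1, \lfloor N/k \rfloor]} a^\alpha
\]
combined with the asymptotics for the denominator $\sum_{i \in [1,N]} i^\alpha$ shows that, along any subsequence of integers $N = k M$ realizing $\mathrm{d}_\alpha^\star(A) = d > 0$, the ratio
\[
\frac{\sum_{j \in kA \cap [1,N]} j^\alpha}{\sum_{i \in [1,N]} i^\alpha}
\]
is bounded below by a positive constant times $d$ (explicitly $d/k$ when $\alpha \in (-1,0]$, and $d$ when $\alpha = -1$, after cancellation of the leading asymptotic terms). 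Hence $\mathrm{d}^\star_\alpha(kA) > 0$ and $kA \notin \mathcal{I}_\alpha$.

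Once all four hypotheses are verified, Proposition \ref{thm:thinnableold} applies and yields that $\mathcal{I}_\alpha = \mathscr{E}_f$ is a strongly thinnable analytic P-ideal. I expect the only real book-keeping to be the separate treatment of the boundary case $\alpha = -1$ (harmonic asymptotics instead of polynomial ones), but no genuine obstacle arises.
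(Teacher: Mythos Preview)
Your approach is exactly the paper's: reduce to Proposition~\ref{thm:thinnableold} with $f(n)=n^{\alpha}$ and verify its hypotheses, which the paper simply declares ``immediate'' and omits. One harmless slip: for $\alpha=-1$ the lower bound on $\mathrm{d}_{-1}^\star(kA)$ is still $d/k$, not $d$ (the factor $k^{\alpha}=k^{-1}$ does not cancel, since $\log(kM)/\log M\to 1$), but this does not affect the argument since any positive bound suffices for stretchability.
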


%Completare  \cite{LMxyz, Leo17}
%
%Hereafter, let $\mu=(\mu_k)$ be a sequence of measures $\mu_k: \mathcal{P}(\mathbf{N}) \to [0,\infty]$ 
%such that the 
%support sets $S_k:=\{n: \mu_k(\{n\})\neq 0\}$ are non-empty finite and pairwise disjoint. Moreover, define the submeasure $\varphi:=\sup_k \mu_k$ and set $\|A\|_\varphi:=\lim_n \varphi(A\setminus \{1,\ldots,n\})$ for each $A\subseteq \mathbf{N}$. Then, the \emph{density ideal} generated by $\mu$ is
%\begin{equation}\label{eq:densityideal}
%\mathcal{I}_\varphi:=\{A: \|A\|_\varphi=0\},
%\end{equation}
%cf. \cite[Chapter 1]{MR1711328}. Our first result follows: 

%%%%%%%%%%%%%%%%%%%%%%%%%%%%%%%%%%%%%%%%%%%%%%%%%%%%%%%%%%%%%%%%

\section{Topological structure}\label{sec:top}

Our first result about the topological structure of ideal limit points sets follows: 

%Hereafter, let $\mu=(\mu_k)$ be a sequence of measures $\mu_k: \mathcal{P}(\mathbf{N}) \to [0,\infty]$ 
%such that the 
%support sets $S_k:=\{n: \mu_k(\{n\})\neq 0\}$ are non-empty finite and pairwise disjoint. Moreover, define the submeasure $\varphi:=\sup_k \mu_k$ and set $\|A\|_\varphi:=\lim_n \varphi(A\setminus \{1,\ldots,n\})$ for each $A\subseteq \mathbf{N}$. Then, the \emph{density ideal} generated by $\mu$ is
%\begin{equation}\label{eq:densityideal}
%\mathcal{I}_\varphi:=\{A: \|A\|_\varphi=0\},
%\end{equation}
%cf. \cite[Chapter 1]{MR1711328}. Our first result follows: 

\begin{thm}\label{thm:Fsigma}
Let $x=(x_n)$ be a sequence taking values in a metric space $X$ and let $\mathcal{I}_\varphi$ be an analytic P-ideal. Then, the set
\begin{displaymath}
\Lambda_x(\mathcal{I}_\varphi,q):=\left\{\ell \in X: \lim_{n\to \infty,\, n \in A} x_{n} = \ell \text{ for some }A\subseteq \mathbf{N} \text{ such that }\|A\|_\varphi \ge q\right\}
\end{displaymath}
is closed for each $q>0$. In particular, $\Lambda_x(\mathcal{I}_\varphi)$ is an $F_\sigma$-set.
\end{thm}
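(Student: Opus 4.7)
The plan is to establish closedness of $\Lambda_x(\mathcal{I}_\varphi, q)$ by a diagonal construction that exploits the lower semicontinuity of $\varphi$, and then to derive the $F_\sigma$ conclusion from the identity $\Lambda_x(\mathcal{I}_\varphi) = \bigcup_{m \geq 1} \Lambda_x(\mathcal{I}_\varphi, 1/m)$ (noting that $A \notin \mathcal{I}_\varphi$ is equivalent to $\|A\|_\varphi > 0$, so some reciprocal of an integer lies below $\|A\|_\varphi$).

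To prove closedness, let $(\ell_m)$ be a sequence in $\Lambda_x(\mathcal{I}_\varphi, q)$ converging to some $\ell \in X$; after passing to a subsequence, I may assume $d(\ell_m, \ell) < 1/m$ for every $m$. For each $m$ fix a witness $A_m \subseteq \mathbf{N}$ with $\|A_m\|_\varphi \geq q$ and $x_n \to \ell_m$ along $A_m$. I would then inductively choose strictly increasing integers $k_1 < L_1 < k_2 < L_2 < \cdots$ so that, first, $d(x_n, \ell_m) < 1/m$ for every $n \in A_m$ with $n > k_m$, and second, the finite slice $C_m := A_m \cap (k_m, L_m]$ satisfies $\varphi(C_m) \geq q - 1/m$. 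The former is immediate from convergence along $A_m$; the latter uses that $\varphi(A_m \setminus \{1,\ldots,k_m\}) \geq \|A_m\|_\varphi \geq q$ combined with property (v) in the definition of a lower semicontinuous submeasure, which gives $\varphi(A_m \setminus \{1,\ldots,k_m\}) = \lim_{L \to \infty} \varphi(A_m \cap (k_m, L])$.

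Set $A := \bigcup_m C_m$, a disjoint union placed inside the consecutive intervals $(k_m, L_m]$. By the triangle inequality $d(x_n, \ell) < 2/m$ whenever $n \in C_m$, and because the slices $C_j$ with $j < m_0$ all lie below $L_{m_0 - 1}$, this shows $x_n \to \ell$ along $A$. For the submeasure bound, fix $J \in \mathbf{N}$: for every $m$ large enough that $k_m > J$ one has $C_m \subseteq A \setminus \{1,\ldots,J\}$ and hence $\varphi(A \setminus \{1,\ldots,J\}) \geq \varphi(C_m) \geq q - 1/m$, so sending $m \to \infty$ gives $\varphi(A \setminus \{1,\ldots,J\}) \geq q$; taking the limit in $J$ yields $\|A\|_\varphi \geq q$, so $\ell \in \Lambda_x(\mathcal{I}_\varphi, q)$.

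The main obstacle I anticipate is the need to secure the \emph{exact} lower bound $\|A\|_\varphi \geq q$ rather than merely $q - \varepsilon$, given that $\varphi$ is only finitely subadditive and the P-ideal property does not apply directly to the sets $A_m \notin \mathcal{I}_\varphi$. The buffer-zone design $(k_m, L_m]$ resolves this: pushing the $k_m$ off to infinity guarantees that beyond any initial segment $\{1,\ldots,J\}$ there are still infinitely many full-mass slices $C_m$, while the bound $L_m$ keeps each $C_m$ finite so that lower semicontinuity delivers nearly full mass $q - 1/m$, and the two slacks (in distance and in mass) are cashed in together as $m \to \infty$. Once closedness is in hand, writing $\Lambda_x(\mathcal{I}_\varphi)$ as the countable union displayed above finishes the proof.
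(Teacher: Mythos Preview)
Your argument is correct and follows essentially the same diagonal construction as the paper: both proofs slice each witness $A_m$ inside a finite interval so that the slice carries $\varphi$-mass at least $q-1/m$ (via lower semicontinuity) and lies beyond the finitely many bad indices where $d(x_n,\ell_m)\ge 1/m$, then take the union and verify $\|A\|_\varphi\ge q$ and $x_n\to\ell$ along $A$. The only cosmetic differences are that the paper uses a single sequence $(\theta_m)$ of contiguous cut-points in place of your pair $(k_m,L_m)$, and handles the bound on $d(\ell_m,\ell)$ at the end rather than by passing to a subsequence at the start.
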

\begin{proof}
Fix $q>0$. The claim is clear if $\Lambda_x(\mathcal{I}_\varphi,q)$ is empty. Hence, let us suppose hereafter that $\Lambda_x(\mathcal{I}_\varphi,q)\neq \emptyset$. Let $(\ell_m)$ be a sequence of limit points in $\Lambda_x(\mathcal{I}_\varphi,q)$ 
such that $\lim_m \ell_m=\ell$. By hypothesis, for each $m$ there exists a set $A_m \subseteq \mathbf{N}$ such that $\lim_{n\to \infty,\, n \in A_m} x_{n} = \ell_m$ and 
$$
\|A_m\|_\varphi = \lim_{n\to \infty}\varphi(A_m\setminus \{1,\ldots,n\})=\inf_{n \in \mathbf{N}} \varphi(A_m\setminus \{1,\ldots,n\}) \ge q.
$$

At this point, let $d$ denote the metric on $X$ and 
define 
\begin{equation}\label{eq:bm}
B_m:=\left\{n\in A_m: d\left(\ell_m,x_n\right) \le \frac{1}{m}\right\}.
\end{equation}
Note that, by construction, each $A_m\setminus B_m$ is finite. Set for convenience $\theta_0:=0$ and define recursively the increasing sequence of positive integers $(\theta_m: m \in \mathbf{N})$ so that $\theta_m$ is the smallest integer greater than both $\theta_{m-1}$ and $\max(A_{m+1}\setminus B_{m+1})$ such that
$$
\varphi(A_m \cap (\theta_{m-1},\theta_m]) \ge q-\nicefrac{1}{m}.
$$
(Note that $\theta_m$ is well defined because $\varphi$ is lower semicontinuous.)

Finally, define 
$
A:=\bigcup_{m\in \mathbf{N}} \left(A_m \cap (\theta_{m-1},\theta_m]\right).
$ 
Let us verify that $A\notin \mathcal{I}_\varphi$ and that the subsequence $(x_n: n \in A)$ converges to $\ell$. 
On the one hand, since $\theta_n \ge n$, we obtain
$$
\varphi(A\setminus \{1,\ldots,n\}) \ge \varphi(A_m \cap (\theta_{m-1},\theta_m]) \ge q-\nicefrac{1}{m}
$$
whenever $m \ge n+1$, hence $\|A\|_\varphi=\inf_{n \in \mathbf{N}}\varphi(A\setminus \{1,\ldots,n\}) \ge q$. 
On the other hand, fix $\varepsilon>0$. Then, there exists $m_0=m_0(\varepsilon) \in \mathbf{N}$ such that 
\begin{equation}\label{eq:m0}
m_0\ge \nicefrac{2}{\varepsilon}\,\,\text{ and }\,\,d(\ell_m,\ell) \le \nicefrac{\varepsilon}{2}
\end{equation}
whenever $m\ge m_0$. It follows that
\begin{equation}\label{eq:m1}
d(x_n,\ell) \le d(x_n,\ell_m)+d(\ell_m,\ell) \le \frac{1}{m}+\frac{\varepsilon}{2} 
\le \varepsilon
\end{equation}
for all $n \in A_m \cap (\theta_{m-1},\theta_m]$ and $m\ge m_0$. 
We conclude by the arbitrariness of $\varepsilon$ that 
$
\lim_{n\to \infty,\, n \in A} x_{n} = \ell.
$ 
In particular, $\Lambda_x(\mathcal{I}_\varphi)=\bigcup_{0< q\text{ rational}}\Lambda_x(\mathcal{I}_\varphi,q)$ is an $F_\sigma$-set.
\end{proof}

It is worth noting that Theorem \ref{thm:Fsigma} generalizes \cite[Theorem 2.6]{MR2463821} and \cite[Theorem 1.1]{MR1838788} for the case $\mathcal{I}_\varphi$ equal to the ideal $\mathcal{I}_0$; in addition, the result essentially appears also in \cite[Theorem 2]{MR2923430}. However, all these proofs seem to be incomplete as it is not clear why the constructed subsequence $(x_n: n \in A)$ converges to $\ell$.%; in particular, the method used here does not extend to first countable spaces, which is the setting of \cite[Theorem 2]{MR2923430} and \cite[Theorem 2.6]{MR2463821}.

%Recalling that every Erd{\H o}s--Ulam ideal is a density ideal (hence, in particular, an analytic P-ideal), see e.g. \cite[Theorem 1.13.3]{MR1711328}, the following corollary is immediate:
The following corollary is immediate:
\begin{cor}\label{cor:erdosulam}
Let $x$ be a sequence taking values in a metric space and let $\mathcal{I}_\varphi$ be a Erd{\H o}s--Ulam ideal. Then, $\Lambda_x(\mathcal{I}_\varphi)$ is an $F_\sigma$-set.
\end{cor}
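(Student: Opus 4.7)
The plan is to reduce this immediately to Theorem \ref{thm:Fsigma}, so the only substantive task is to record that every Erd\H{o}s--Ulam ideal is an analytic P-ideal in the sense of the preliminaries. First I would recall that an Erd\H{o}s--Ulam ideal is determined by a weight $g:\mathbf{N}\to [0,\infty)$ with $\sum_n g(n)=\infty$ and $g(n)=o(g(1)+\cdots+g(n))$ as the collection of $A\subseteq \mathbf{N}$ satisfying $\lim_n \sum_{i\in A\cap [1,n]}g(i)/\sum_{i\in [1,n]}g(i)=0$. This is flagged in the paragraph preceding the corollary (with reference to Example~1.2.3.(d) of \cite{MR1711328}), and the ideal coincides with $\mathcal{I}_\varphi$ for the lower semicontinuous submeasure
$$
\varphi(A):=\sup_{n\in \mathbf{N}}\frac{\sum_{i\in A\cap [1,n]}g(i)}{\sum_{i\in [1,n]}g(i)}.
$$
Checking axioms (i)--(v) from the preliminaries for this $\varphi$ is routine: $\varphi(\emptyset)=0$ and $\varphi(\{k\})<\infty$ are immediate, monotonicity and subadditivity follow from the fact that only the numerator changes with the set, and the continuity-from-the-left condition (v) is built into the supremum. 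A short computation then identifies $\|A\|_\varphi=0$ with the Erd\H{o}s--Ulam density vanishing.

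Once this representation is in place, the corollary follows by direct quotation of Theorem \ref{thm:Fsigma}: each set $\Lambda_x(\mathcal{I}_\varphi,q)$ is closed for $q>0$, hence
$$
\Lambda_x(\mathcal{I}_\varphi)=\bigcup_{0<q\text{ rational}}\Lambda_x(\mathcal{I}_\varphi,q)
$$
is an $F_\sigma$-set. I do not anticipate any genuine obstacle here, since beyond the classical identification of Erd\H{o}s--Ulam ideals with a subclass of analytic P-ideals the argument is entirely a specialization of the preceding theorem; this is exactly why the author labels the corollary as immediate and omits the details.
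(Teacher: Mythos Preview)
Your proposal is correct and matches the paper's approach exactly: the corollary is stated as immediate precisely because every Erd\H{o}s--Ulam ideal is an analytic P-ideal (as recorded just before the corollary via \cite[Example~1.2.3.(d)]{MR1711328}), so Theorem~\ref{thm:Fsigma} applies verbatim. Your extra verification of the submeasure axioms and the identification $\|A\|_\varphi=0$ with vanishing Erd\H{o}s--Ulam density is a harmless elaboration of what the paper takes for granted.
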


As it is shown in the following example, it may be the case that $\Lambda_x(\mathcal{I}_\varphi)$ is not closed.
\begin{example}\label{examplenonclosed}
Let $x=(x_n)$ be the real sequence defined by $x_1=1$ and $x_n=1/f(n)$, where $f(n)$ is the least prime factor of $n$. Fix also a real parameter $\alpha \ge -1$ and let $\mathcal{I}_\alpha$ be the ideal of subsets of zero $\alpha$-density, as defined in \eqref{eq:Ialpha}. 

It is easily seen that each $1/p$, with $p$ prime, is a $\mathcal{I}_\alpha$-limit point of $x$: indeed, if $A=\{a_n: n\in \mathbf{N}\}$ is the canonical enumeration of $p\mathbf{N}\setminus \bigcup_{q < p, q\text{ prime}}q\mathbf{N}$, then $A$ has $\alpha$-asymptotic density $\frac{1}{p}\prod_{q<p, q\text{ prime}}\left(1-\frac{1}{q}\right)>0$ and $x_n=1/p$ for all $n\in A$. On the other hand, $0\notin \Lambda_x(\mathcal{I}_\alpha)$: indeed, if a subsequence $(x_{n_k})$ converges to $0$, then for each $\varepsilon>0$ there exists a finite set $S=S(\varepsilon)$ and a prime $p=p(\varepsilon)$ such that
$$
\{n_k: k \in \mathbf{N}\}\subseteq S\cup \{n_k: |x_{n_k}|<\varepsilon\}\subseteq S\cup \{n:f(n)\ge p\}\subseteq S\cup p\mathbf{N}.
$$

Recalling that $\mathrm{d}_\alpha^\star$ is $(-1)$-homogeneous, i.e., $\mathrm{d}_\alpha^\star\,(kX)=\mathrm{d}_\alpha^\star\,(X)/k$ for all $X\subseteq \mathbf{N}$ and integers $k \ge 1$, 
(hence, in particular, strechable), 
monotone, and subadditive, cf. \cite[Example 4]{LeoTri}, it follows that 
$$
\mathrm{d}_\alpha^\star\,(\{n_k: k \in \mathbf{N}\}) \le 
\mathrm{d}_\alpha^\star\,(S\cup p\mathbf{N})\le 
\mathrm{d}_\alpha^\star\,(S)+\mathrm{d}_\alpha^\star\,(p\mathbf{N})=1/p.
$$
Since $p(\varepsilon) \to \infty$ as $\varepsilon \to 0$, then $\{n_k: k \in \mathbf{N}\} \in \mathcal{I}_\alpha$. In particular, $\Lambda_x(\mathcal{I}_\alpha)$ is not closed.
\end{example}

A stronger result holds in the case that the ideal is summable. In this regard, let $f: \mathbf{N} \to [0,\infty)$ be a function such that $\sum_{n\ge 1}f(n)=\infty$. Then, the \emph{summable ideal} generated by $f$ is
$$
\mathcal{I}_f:=\left\{S\subseteq \mathbf{N}: \sum_{n \in S}f(n)<\infty\right\}.
$$

\begin{thm}\label{thm:summableclosed}
Let $x=(x_n)$ be a sequence taking values in a metric space $X$ and let $\mathcal{I}_f$ be a summable ideal. Then $\Lambda_x(\mathcal{I}_f)$ is closed.
\end{thm}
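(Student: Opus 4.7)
The plan is to reduce the statement to Theorem~\ref{thm:Fsigma} by identifying the correct lower semicontinuous submeasure associated with a summable ideal and observing that it is essentially two-valued on the quotient $\mathcal{P}(\mathbf{N})/\mathcal{I}_f$.

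First I would verify that $\mathcal{I}_f$ is an analytic P-ideal realized by the submeasure
\[
\varphi: \mathcal{P}(\mathbf{N}) \to [0,\infty], \qquad \varphi(A) := \sum_{n \in A} f(n).
\]
Indeed, properties (i)--(v) from Section~\ref{sec:preliminaries} are immediate, and $A \in \mathcal{I}_f$ if and only if $\varphi(A) < \infty$ if and only if $\|A\|_\varphi = 0$, so \eqref{eq:analyticPideal} holds.

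The key observation is the following dichotomy: for every $A \subseteq \mathbf{N}$,
\[
\|A\|_\varphi = \lim_{n \to \infty} \sum_{k \in A,\, k > n} f(k) \in \{0, \infty\}.
\]
If $A \in \mathcal{I}_f$, the tail sums vanish; if $A \notin \mathcal{I}_f$, then $\sum_{n \in A} f(n) = \infty$, and removing a finite initial segment leaves the sum infinite, so every tail is $+\infty$.

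Consequently, fixing any $q > 0$ (for instance $q = 1$), the set $\Lambda_x(\mathcal{I}_f, q)$ defined in Theorem~\ref{thm:Fsigma} coincides with $\Lambda_x(\mathcal{I}_f)$: any $\ell \in \Lambda_x(\mathcal{I}_f)$ is witnessed by some $A \notin \mathcal{I}_f$, for which automatically $\|A\|_\varphi = \infty \ge q$, and the reverse inclusion is trivial. Since Theorem~\ref{thm:Fsigma} asserts that $\Lambda_x(\mathcal{I}_\varphi, q)$ is closed, the conclusion that $\Lambda_x(\mathcal{I}_f)$ is closed follows at once. There is no genuine obstacle here; the content of the proof is entirely contained in recognizing the two-valued nature of $\|\cdot\|_\varphi$ for summable ideals, which collapses the $F_\sigma$ union in Theorem~\ref{thm:Fsigma} to a single closed set.
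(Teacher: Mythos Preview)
Your argument is correct. The submeasure $\varphi(A)=\sum_{n\in A}f(n)$ is indeed a lower semicontinuous submeasure with $\mathcal{I}_f=\mathcal{I}_\varphi$, and the dichotomy $\|A\|_\varphi\in\{0,\infty\}$ collapses the stratification $\Lambda_x(\mathcal{I}_\varphi)=\bigcup_{q>0}\Lambda_x(\mathcal{I}_\varphi,q)$ to a single closed level set, so Theorem~\ref{thm:Fsigma} applies verbatim.

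The paper takes a different path: rather than invoking Theorem~\ref{thm:Fsigma} as a black box, it reruns the same construction directly for the summable case, choosing the cut points $\theta_m$ so that $\sum_{a\in A_m\cap(\theta_{m-1},\theta_m]}f(a)\ge 1$ and then checking by hand that the glued set $A$ lies outside $\mathcal{I}_f$ and that $(x_n:n\in A)\to\ell$. Your reduction is cleaner and makes explicit \emph{why} summable ideals behave better than general analytic P-ideals here: the associated $\|\cdot\|_\varphi$ cannot distinguish between ``large'' and ``very large'' sets, so no genuinely new work beyond Theorem~\ref{thm:Fsigma} is needed. The paper's direct argument, on the other hand, is self-contained and does not require the reader to recall Solecki's representation or to verify that $\varphi$ may legitimately take the value $+\infty$ in that framework.
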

\begin{proof}
The claim is clear if $\Lambda_x(\mathcal{I}_f)$ is empty. Hence, let us suppose hereafter that $\Lambda_x(\mathcal{I}_f) \neq \emptyset$. Let $(\ell_m)$ be a sequence in $\Lambda_x(\mathcal{I}_f)$ 
converging (in the ordinary sense) to $\ell$. 
%such that $\lim_m \ell_m=\ell$. 
Then, for each $m$ there exists $A_m \subseteq \mathbf{N}$ such that $\lim_{n\to \infty,\, n \in A_m} x_{n} = \ell_m$ and $A_m \notin \mathcal{I}_f$, i.e., 
$
\sum_{a \in A_m}f(a)=\infty
$. 
Let $d$ denote the metric on $X$ and, for each $m \in \mathbf{N}$, let $B_m$ be the set defined in \eqref{eq:bm}. Similarly to the proof of Theorem \ref{thm:Fsigma}, set $\theta_0:=0$ and define recursively the increasing sequence of positive integers $(\theta_m: m \in \mathbf{N})$ so that $\theta_m$ is the smallest integer greater than both $\theta_{m-1}$ and $\max(A_{m+1}\setminus B_{m+1})$ for which
$$
\sum_{a \in A_m \cap (\theta_{m-1},\theta_m]}f(a) \ge 1. %\\ \theta_{m-1}<a\le \theta_m}
$$

Finally, set 
$
A:=\bigcup_{m \in \mathbf{N}} A_m \cap (\theta_{m-1},\theta_m].
$ 
It follows by construction that $A\notin \mathcal{I}_f$. Moreover, for each $\varepsilon>0$, we have that $\{n \in A: d(x_n,\ell)\}$ is finite with a reasoning analogue to \eqref{eq:m0} and \eqref{eq:m1}. In particular, $\lim_{n\to \infty,\, n \in A} x_{n} = \ell$, completing the proof.
\end{proof}
 
%%%%%%%%%%%%%%%%%%%%%%%%%%%%%%%%%%%%%%%%%%%%%%%%%%%%%%%%%%%%%%%%%%%%%%%%%%%%%%

\section{Subsequences Limit Points}

Consider the natural bijection between the collection of all subsequences of $(x_n)$ and real numbers $\omega \in (0,1]$ with non-terminating dyadic expansion 
$\sum_{i\ge 1}d_i(\omega)2^{-i}$, where we identify a subsequence $(x_{n_k})$ of $(x_n)$ with $\omega \in (0,1]$ if and only if $d_i(\omega)=1$ if $i=n_k$, for some $k \in \mathbf{N}$, and $d_i(\omega)=0$ otherwise, cf. \cite[Appendix A31]{MR1324786} and \cite{MR1260176}. Accordingly, for each $\omega \in (0,1]$, denote by $x \upharpoonright \omega$ the subsequence of $(x_n)$ obtained by omitting $x_i$ if and only if $d_i(\omega)=0$.

In addition, let $\lambda: \mathscr{M}\to \mathbf{R}$ denote the Lebesgue measure, where $\mathscr{M}$ stands for the completion of the Borel $\sigma$-algebra on $(0,1]$. 

Finally, let $\Omega$ be the set of normal numbers, i.e.,
$$
\Omega:=\left\{\omega \in (0,1]: \lim_{n\to \infty}\frac{d_1(\omega)+\cdots+d_n(\omega)}{n}=\frac{1}{2}\right\}.
$$

\begin{lem}\label{lem:inclusion}
Let $\mathcal{I}$ be a weakly thinnable ideal and let $x=(x_n)$ be a sequence taking values in a topological space. Then 
$
\lambda\left(\left\{\omega \in (0,1]: \Lambda_{x\upharpoonright \omega}(\mathcal{I} )\subseteq \Lambda_x(\mathcal{I} )\right\}\right)=1.
$
\end{lem}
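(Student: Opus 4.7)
The plan is to establish the inclusion $\Lambda_{x\upharpoonright\omega}(\mathcal{I})\subseteq \Lambda_x(\mathcal{I})$ pointwise for every $\omega\in\Omega$, and then invoke the fact that $\Omega$ has full Lebesgue measure. The latter is Borel's classical normal number theorem, so only the pointwise inclusion requires argument, and this reduces to a clean application of weak thinnability together with a translation of notation between ``subsequence of a subsequence'' and the operation $A\mapsto A_B$ defined in Section~\ref{sec:preliminaries}.

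Fix $\omega\in\Omega$ and let $A=A(\omega):=\{i\in\mathbf{N}:d_i(\omega)=1\}$, with canonical enumeration $\{n_k:k\in\mathbf{N}\}$. By normality of $\omega$, the set $A$ has asymptotic density $\tfrac12>0$; moreover, by the very definition of $x\upharpoonright\omega$, we have $x\upharpoonright\omega=(x_{n_k})_{k\in\mathbf{N}}$. Now fix $\ell\in\Lambda_{x\upharpoonright\omega}(\mathcal{I})$: by definition there exists $B\subseteq\mathbf{N}$ with $B\notin\mathcal{I}$ such that the sub-subsequence $(x_{n_k})_{k\in B}$ converges to $\ell$.

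The crux is the observation that the set of indices of the \emph{original} sequence $x$ which labels this convergent sub-subsequence is precisely
\[
\{n_b:b\in B\}=A_B,
\]
in the notation of the excerpt. Since $A$ has non-zero asymptotic density and $B\notin\mathcal{I}$, weak thinnability (part of Definition~\ref{def:thinn0}) yields $A_B\notin\mathcal{I}$. Consequently $\ell\in\Lambda_x(\mathcal{I})$, and by arbitrariness of $\ell$ we conclude $\Lambda_{x\upharpoonright\omega}(\mathcal{I})\subseteq\Lambda_x(\mathcal{I})$. Since this holds for every $\omega\in\Omega$ and $\lambda(\Omega)=1$, the lemma follows.

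There is no real obstacle here: the hypothesis of weak thinnability is calibrated exactly for this situation, and the only conceptual step is noticing that composition of indices in ``subsequence of a subsequence'' coincides with the operation $A_B$. One could alternatively bypass normal numbers and use any full-measure set on which $A(\omega)$ has positive density, but using $\Omega$ keeps the exposition consistent with the terminology introduced just before the lemma.
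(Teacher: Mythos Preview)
Your proof is correct and follows essentially the same route as the paper's: restrict to $\omega\in\Omega$, use Borel's normal number theorem to get full measure, and apply weak thinnability to the composition of indices $A_B=\{n_b:b\in B\}$ to conclude that the limit point of the subsequence is already an $\mathcal{I}$-limit point of $x$. The only cosmetic difference is that the paper phrases the pointwise step as a proof by contradiction, whereas you argue directly.
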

\begin{proof}
It follows by Borel's normal number theorem \cite[Theorem 1.2]{MR1324786} that $\Omega \in \mathscr{M}$ and $\lambda(\Omega)=1$. Fix $\omega \in \Omega$ and denote by $(x_{n_k})$ the subsequence $x\upharpoonright \omega$. Let us suppose that $\Lambda_{x\upharpoonright \omega}(\mathcal{I} )\setminus \Lambda_x(\mathcal{I} ) \neq \emptyset$ and fix a point $\ell$ therein. Then, the set of indexes $\{n_k: k\in \mathbf{N}\}$ has asymptotic density $\nicefrac{1}{2}$ and, by hypothesis, there exists a subsequence $\left(x_{n_{k_m}}\right)$ of $(x_{n_k})$ such that $\{k_m: m \in \mathbf{N}\} \notin \mathcal{I} $ and $\lim_m x_{n_{k_m}}=\ell$. On the other hand, since $\mathcal{I} $ is weakly thinnable, the set $\{n_{k_m}: m \in \mathbf{N}\}$ does not belong to $\mathcal{I} $. Considering that $\left(x_{n_{k_m}}\right)$ is clearly a subsequence of $(x_n)$, it follows that $\ell \in \Lambda_x(\mathcal{I} )$, which contradicts our assumption. This proves that $\Lambda_{x\upharpoonright \omega}(\mathcal{I} )\subseteq \Lambda_x(\mathcal{I} )$ for all $\omega \in \Omega$.
\end{proof}

The following result is the analogue of \cite[Theorem 3.1]{Leo17} for ideal limit points:
\begin{thm}\label{thm:mainnew1}
Let $\mathcal{I}_\varphi$ be a strongly thinnable analytic P-ideal and let $x=(x_n)$ be a sequence taking values in a separable metric space. Then 
$$
\lambda\left(\left\{\omega \in (0,1]: \Lambda_x(\mathcal{I}_\varphi)=\Lambda_{x\upharpoonright \omega}(\mathcal{I}_\varphi)\right\}\right)=1.
$$
\end{thm}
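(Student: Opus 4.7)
\emph{Proof plan.} The inclusion $\Lambda_{x\upharpoonright\omega}(\mathcal{I}_\varphi)\subseteq \Lambda_x(\mathcal{I}_\varphi)$ for almost every $\omega$ is already provided by Lemma \ref{lem:inclusion} (via the weak thinnability \ref{item1}), so the entire content is the reverse inclusion. The first move is to reduce to a countable dense set by combining Theorem \ref{thm:Fsigma} with the separability of $X$: writing $\Lambda_x(\mathcal{I}_\varphi) = \bigcup_{q>0\text{ rational}} \Lambda_x(\mathcal{I}_\varphi,q)$ as a union of closed sets, each $\Lambda_x(\mathcal{I}_\varphi,q)$ admits a countable dense subset $D_q$. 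It will suffice to exhibit, for each pair $(q,\ell)$ with $q>0$ rational and $\ell \in D_q$, a positive constant $q'=q'(q)$ and a full-measure set $\Omega_{q,\ell}\subseteq (0,1]$ such that $\ell \in \Lambda_{x\upharpoonright\omega}(\mathcal{I}_\varphi,q')$ for every $\omega \in \Omega_{q,\ell}$. Indeed, on the countable intersection $\Omega^*:=\bigcap_{q,\ell}\Omega_{q,\ell}$, which still has full measure, the closedness of $\Lambda_{x\upharpoonright\omega}(\mathcal{I}_\varphi,q')$ (Theorem \ref{thm:Fsigma}) upgrades $D_q \subseteq \Lambda_{x\upharpoonright\omega}(\mathcal{I}_\varphi,q')$ to $\Lambda_x(\mathcal{I}_\varphi,q)=\overline{D_q}\subseteq \Lambda_{x\upharpoonright\omega}(\mathcal{I}_\varphi)$, and a union in $q$ yields the desired reverse inclusion.

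\emph{Subsequence construction.} Fix $(q,\ell)$ and a witness $A\subseteq\mathbf{N}$ with $\|A\|_\varphi\ge q$ and $\lim_{n\in A}x_n=\ell$. Enumerating $I(\omega):=\{n_1(\omega)<n_2(\omega)<\cdots\}$ the index set encoded by $\omega$, define
$$
B_\omega:=\{k\in\mathbf{N}: n_k(\omega)\in A\}.
$$
Then $(x_{n_k}: k\in B_\omega)$ is a subsequence of $(x_n: n\in A)$, and hence converges to $\ell$ as soon as $B_\omega$ is infinite; the task reduces to bounding $\|B_\omega\|_\varphi$ below by a positive constant depending only on $q$. Writing $B_\omega=\{k_1<k_2<\cdots\}$, the $j$-th element of $A\cap I(\omega)$ equals $n_{k_j}\ge k_j$, so $B_\omega \le A\cap I(\omega)$ in the order of the paper. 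Applying property \ref{item3} of strong thinnability with $X=B_\omega$ and $Y=A\cap I(\omega)$ produces an absolute constant $c>0$ with $\|B_\omega\|_\varphi\ge c\,\|A\cap I(\omega)\|_\varphi$, so it remains to bound $\|A\cap I(\omega)\|_\varphi$ from below almost surely.

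\emph{Key probabilistic step (main obstacle).} The remaining goal is to show $\|A\cap I(\omega)\|_\varphi\ge q/2$ for almost every $\omega$, which yields $q'=cq/2$. Consider the measure-preserving involution $\omega\mapsto \omega^c:=1-\omega$, which off a countable null set satisfies $d_i(\omega^c)=1-d_i(\omega)$ and hence $I(\omega^c)=\mathbf{N}\setminus I(\omega)$. Subadditivity of $\varphi$ applied at each truncation $A\setminus\{1,\ldots,n\}$, followed by passage to the limit in $n$, gives the pointwise bound
$$
\|A\cap I(\omega)\|_\varphi+\|A\cap I(\omega^c)\|_\varphi \ge \|A\|_\varphi\ge q
$$
a.e.; integrating and using the symmetry of the involution yields $\int\|A\cap I(\omega)\|_\varphi\,d\lambda\ge q/2$. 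The principal obstacle is to promote this expectation bound to an almost-sure inequality. The key observation is that the functional $\omega\mapsto \|A\cap I(\omega)\|_\varphi$ is Borel measurable (since $\varphi$ is lower semicontinuous) and, being the infimum of quantities insensitive to modifications of $\omega$ in any finite set of digits, is tail measurable with respect to the independent digits $(d_i(\omega))_{i\ge 1}$. Kolmogorov's zero-one law then forces it to be almost surely equal to a constant $c_*\in[0,\infty]$, and combining with the expectation estimate gives $c_*\ge q/2$. Hence $\|B_\omega\|_\varphi\ge cq/2>0$ almost surely, which in particular means $B_\omega$ is infinite and delivers $\ell\in \Lambda_{x\upharpoonright\omega}(\mathcal{I}_\varphi,cq/2)$, completing the plan.
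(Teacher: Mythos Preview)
Your proof is correct and follows the same overall scaffolding as the paper: Lemma~\ref{lem:inclusion} for one inclusion, then the decomposition $\Lambda_x(\mathcal{I}_\varphi)=\bigcup_q \Lambda_x(\mathcal{I}_\varphi,q)$, a countable dense subset of each closed piece via separability, and the closedness of $\Lambda_{x\upharpoonright\omega}(\mathcal{I}_\varphi,q')$ from Theorem~\ref{thm:Fsigma} to pass from the dense set to its closure. The reduction to bounding $\|B_\omega\|_\varphi$ via property~\ref{item3} and the inequality $B_\omega\le A\cap I(\omega)$ is also exactly the paper's step (there $B_\omega$ is called $C$ and $A\cap I(\omega)$ is $A\cap B=A_K$).

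The genuine difference is how you bound $\|A\cap I(\omega)\|_\varphi$ from below. The paper restricts to $\omega$ for which the digits along $A$ have asymptotic density $\tfrac12$ (Borel's normal number theorem applied to the subsequence of digits indexed by $A$), so that the index set $K=\{k:a_k\in I(\omega)\}$ has density $\tfrac12$, and then invokes property~\ref{item2} of strong thinnability to get $\|A_K\|_\varphi\ge \kappa q$. You instead observe that $\omega\mapsto\|A\cap I(\omega)\|_\varphi$ is a tail functional of the i.i.d.\ digit sequence (since $\|\cdot\|_\varphi$ ignores finite modifications), hence almost surely constant by Kolmogorov's zero--one law, and pin down that constant via the subadditivity inequality $\|A\cap I(\omega)\|_\varphi+\|A\cap I(\omega^c)\|_\varphi\ge\|A\|_\varphi\ge q$ together with the measure-preserving symmetry $\omega\mapsto\omega^c$. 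This is a clean alternative: it avoids Borel's theorem along the set $A$ and, more notably, never uses property~\ref{item2} at all. In effect your argument shows that conditions~\ref{item1} and~\ref{item3} of Definition~\ref{def:strongthinn} already suffice for Theorem~\ref{thm:mainnew1}, which is a mild strengthening of the stated result. The paper's route, on the other hand, makes the role of the ``density $\tfrac12$'' selection more transparent and stays closer to the combinatorics of thinnability.
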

\begin{proof}
Thanks to Lemma \ref{lem:inclusion}, it is sufficient to show that
\begin{equation}\label{eq:claim224}
\lambda\left(\left\{\omega \in (0,1]: \Lambda_x(\mathcal{I}_\varphi) \subseteq \Lambda_{x\upharpoonright \omega}(\mathcal{I}_\varphi)\right\}\right)=1.
\end{equation}
This is clear if $\Lambda_x(\mathcal{I}_\varphi)$ is empty. Otherwise, let us suppose hereafter that $\Lambda_x(\mathcal{I}_\varphi)\neq \emptyset$. Note that, by the $\sigma$-subadditivity of $\lambda$, Claim \eqref{eq:claim224} would follow from
\begin{equation}\label{eq:realclaim0}
\lambda\left(\left\{\omega \in (0,1]: \Lambda_x(\mathcal{I}_\varphi,q) \subseteq \Lambda_{x\upharpoonright \omega}(\mathcal{I}_\varphi)\right\}\right)=1
\end{equation}
for each (rational) $q>0$. At this point, recall from Theorem \ref{thm:Fsigma} that each $\Lambda_x(\mathcal{I}_\varphi,q)$ is closed and observe that, since $X$ is a separable metric space, every closed set is separable. 
%, cf. \cite[Remark 3.2]{Leo17}. 
Hence, fix a sufficiently small $q>0$ such that $\Lambda_x(\mathcal{I}_\varphi,q)\neq \emptyset$ and let $L$ be a (non-empty) countable subset with closure $\Lambda_x(\mathcal{I}_\varphi,q)$.

Fix $\ell \in L$. By hypothesis there exists a subsequence $(x_{n_k})$ such that $\lim_k x_{n_k}=\ell$ and $\|A\|_\varphi\ge q$, where $A:=\{n_k:k \in \mathbf{N}\}$. Define the set
$$
\Theta_\ell:=\left\{\omega \in (0,1]: \lim_{k\to \infty}\frac{d_{n_1}(\omega)+\cdots+d_{n_k}(\omega)}{k}=\frac{1}{2}\right\}.
$$
It follows again by Borel's normal number theorem that $\Theta_\ell \in \mathscr{M}$ and $\lambda(\Theta_\ell)=1$. 
%, hence also $\Theta \in \mathscr{M}$ and $\lambda(\Theta)=1$, where $\Theta:=\bigcap_{\ell \in L}\Theta_\ell$. 
Fix also $\omega \in \Theta_\ell$ and denote by $(x_{m_k})$ the subsequence $x\upharpoonright \omega$. Then, letting $B:=\{m_k: k \in \mathbf{N}\}$, we obtain that $A\cap B$ admits asymptotic density $\nicefrac{1}{2}$ relative to $A$, i.e., the set $K:=\{k: n_k \in B\}$ admits asymptotic density $\nicefrac{1}{2}$. Since $\mathcal{I}_\varphi$ is strongly thinnable, there exists a positive constant $\kappa=\kappa(q)$ such that 
$$
\|A_K\|_\varphi=\|A \cap B\|_\varphi \ge \kappa q.
$$
In addition, since $C:=\{k: m_k \in A_K\} \le A_K$, we get by the strongly thinnability of $\mathcal{I}_\varphi$ that $\|C\|_\varphi \ge cq$, for some $c>0$. It follows by construction that the subsequence $(x_{m_k}: k \in C)$ of $(x_{m_k}: k \in \mathbf{N})$ converges to $\ell$, hence $\ell \in \Lambda_{x\upharpoonright \omega}(\mathcal{I}_\varphi,cq)$ for all $\omega \in \Theta_\ell$.

Thus, define $\Theta:=\bigcap_{\ell \in L}\Theta_\ell$ and note that $\Theta \in \mathscr{M}$ and $\lambda(\Theta)=1$. Therefore  
$
\lambda\left(\left\{\omega \in \Theta: L \subseteq \Lambda_{x\upharpoonright \omega}(\mathcal{I}_\varphi,cq)\right\}\right)=1.
$ 
On the other hand, each $\Lambda_{x\upharpoonright \omega}(\mathcal{I}_\varphi,cq)$ is closed by Theorem \ref{thm:Fsigma}, hence it contains the closure of $L$, that is,  
$$
\lambda\left(\left\{\omega \in \Theta: \Lambda_x(\mathcal{I}_\varphi,q) \subseteq \Lambda_{x\upharpoonright \omega}(\mathcal{I}_\varphi,cq)\right\}\right)=1.
$$
This implies \eqref{eq:realclaim0} since $\Lambda_{x\upharpoonright \omega}(\mathcal{I}_\varphi,cq) \subseteq \Lambda_{x\upharpoonright \omega}(\mathcal{I}_\varphi)$, completing the proof.
\end{proof}

As a consequence of Corollary \ref{cor:Ialpha} and Theorem \ref{thm:mainnew1}, we obtain:
\begin{cor}\label{stat}
Let $x$ be a sequence taking values in a separable metric space. Then the set of statistical limit point of $x$ is equal to the set of statistical limit points of almost all its subsequences \textup{(}in the sense of Lebesgue measure\textup{)}.
\end{cor}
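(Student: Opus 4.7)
The plan is to identify the statistical limit points as a special case of $\mathcal{I}_\varphi$-limit points for a suitably chosen analytic P-ideal, and then simply invoke Theorem \ref{thm:mainnew1}. Concretely, statistical limit points are by definition $\mathcal{I}_0$-limit points, where $\mathcal{I}_0$ is the ideal of subsets of $\mathbf{N}$ of zero asymptotic density. Observe that $\mathcal{I}_0$ coincides with the ideal $\mathcal{I}_\alpha$ of \eqref{eq:Ialpha} in the special case $\alpha=0$: indeed, for $\alpha=0$ the weights $i^\alpha$ are identically $1$, so $\mathrm{d}_0^\star(S)=\limsup_{n\to\infty}|S\cap [1,n]|/n$ is the usual upper asymptotic density.

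First I would record this identification $\mathcal{I}_0=\mathcal{I}_\alpha|_{\alpha=0}$, which puts us squarely in the range $\alpha\in[-1,0]$ covered by Corollary \ref{cor:Ialpha}. That corollary then yields that $\mathcal{I}_0$ is a strongly thinnable analytic P-ideal; in particular, there is an associated lower semicontinuous submeasure $\varphi$ such that $\mathcal{I}_0=\mathcal{I}_\varphi$ and $\mathcal{I}_\varphi$ satisfies properties \ref{item1}--\ref{item3} of Definition \ref{def:strongthinn}.

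Next, since the ambient space $X$ is a separable metric space, the hypotheses of Theorem \ref{thm:mainnew1} are fulfilled for the ideal $\mathcal{I}_\varphi=\mathcal{I}_0$ and the sequence $x$. Applying that theorem gives
$$
\lambda\!\left(\left\{\omega \in (0,1]: \Lambda_x(\mathcal{I}_0)=\Lambda_{x\upharpoonright \omega}(\mathcal{I}_0)\right\}\right)=1,
$$
which, translated back, says precisely that the set of statistical limit points of $x$ equals the set of statistical limit points of $x\upharpoonright\omega$ for Lebesgue-almost every $\omega\in(0,1]$. Via the natural bijection between $(0,1]$ (with non-terminating dyadic expansions) and subsequences of $(x_n)$ fixed at the beginning of Section 4, this is exactly the assertion of the corollary.

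There is no real obstacle: the entire content has already been isolated in Corollary \ref{cor:Ialpha} and Theorem \ref{thm:mainnew1}, and the only point to verify is the (immediate) identification of $\mathcal{I}_0$ with $\mathcal{I}_\alpha$ at $\alpha=0$, so the argument can be written in a single line. This is precisely why the author notes that the proof is omitted.
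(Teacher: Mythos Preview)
Your proposal is correct and matches the paper's own approach exactly: the corollary is stated as an immediate consequence of Corollary \ref{cor:Ialpha} (applied at $\alpha=0$, so that $\mathcal{I}_\alpha=\mathcal{I}_0$) and Theorem \ref{thm:mainnew1}, with no additional argument supplied. Your identification of $\mathcal{I}_0$ with $\mathcal{I}_\alpha|_{\alpha=0}$ and subsequent invocation of those two results is precisely what the paper intends.
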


With a similar argument, the following can be shown (we omit details):
\begin{thm}\label{thm:mainnew2}
Let $\mathcal{I}_f$ be a thinnable summable ideal and $(x_n)$ be a sequence taking values in a separable metric space $X$. Then 
$$
\lambda\left(\left\{\omega \in (0,1]: \Lambda_x(\mathcal{I}_f)=\Lambda_{x\upharpoonright \omega}(\mathcal{I}_f)\right\}\right)=1.
$$
\end{thm}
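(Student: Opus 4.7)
The plan is to mirror the proof of Theorem \ref{thm:mainnew1}, replacing the two places where analytic P-ideal machinery was invoked by their summable-ideal analogues: the topological structure will come from Theorem \ref{thm:summableclosed} (closedness, not merely $F_\sigma$-ness), and the quantitative properties \ref{item2}--\ref{item3} of strong thinnability will be replaced by the purely qualitative clauses of ordinary thinnability in Definition \ref{def:thinn0}.

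First, by Lemma \ref{lem:inclusion} (applicable because a thinnable ideal is in particular weakly thinnable), it suffices to prove that $\lambda(\{\omega \in (0,1]: \Lambda_x(\mathcal{I}_f) \subseteq \Lambda_{x\upharpoonright \omega}(\mathcal{I}_f)\})=1$. If $\Lambda_x(\mathcal{I}_f)=\emptyset$ there is nothing to prove; otherwise, by Theorem \ref{thm:summableclosed} the set $\Lambda_x(\mathcal{I}_f)$ is closed in the separable metric space $X$, hence itself separable, and we may fix a countable dense subset $L\subseteq \Lambda_x(\mathcal{I}_f)$. Note that, unlike the P-ideal case, no $\sigma$-subadditivity reduction over a parameter $q>0$ is required here, since closedness holds directly.

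For each $\ell\in L$ fix a witnessing enumeration $A_\ell=\{n_k:k\in \mathbf{N}\}\notin \mathcal{I}_f$ with $x_{n_k}\to \ell$, and define
$$
\Theta_\ell:=\left\{\omega\in(0,1]: \lim_{k\to\infty}\frac{d_{n_1}(\omega)+\cdots+d_{n_k}(\omega)}{k}=\frac{1}{2}\right\}.
$$
By Borel's normal number theorem applied along the subsequence $(n_k)$, one has $\Theta_\ell\in\mathscr{M}$ and $\lambda(\Theta_\ell)=1$. For $\omega\in \Theta_\ell$ denote $x\upharpoonright\omega$ by $(x_{m_k})$ and set $B:=\{m_k:k\in \mathbf{N}\}$; then $K:=\{k:n_k\in B\}$ has asymptotic density $\nicefrac{1}{2}$. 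By the weak thinnability clause of Definition \ref{def:thinn0}, the set $(A_\ell)_K=A_\ell\cap B$ does not belong to $\mathcal{I}_f$. Next, let $C:=\{k:m_k\in A_\ell\cap B\}$; since $m_j\ge j$ for every $j$, the canonical enumeration of $C$ is pointwise dominated by that of $A_\ell\cap B$, i.e., $C\le A_\ell\cap B$, so the monotonicity clause of thinnability gives $C\notin \mathcal{I}_f$. Since $(x_{m_k}:k\in C)$ is a subsequence of $(x_{m_k})$ with limit $\ell$, we conclude that $\ell\in \Lambda_{x\upharpoonright\omega}(\mathcal{I}_f)$.

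Finally, set $\Theta:=\bigcap_{\ell\in L}\Theta_\ell$; this is a countable intersection of full-measure sets, hence $\lambda(\Theta)=1$ and $L\subseteq \Lambda_{x\upharpoonright\omega}(\mathcal{I}_f)$ for every $\omega\in\Theta$. Since $\Lambda_{x\upharpoonright\omega}(\mathcal{I}_f)$ is closed by Theorem \ref{thm:summableclosed}, it contains the closure of $L$, which is $\Lambda_x(\mathcal{I}_f)$, yielding the desired inclusion on a set of full measure. The only conceptual subtlety — and the step most likely to hide a trap — is verifying $C\le A_\ell\cap B$ so that the monotonicity clause of thinnability applies; once that is in place, the argument is a clean summable analogue of Theorem \ref{thm:mainnew1}, free of the quantitative bookkeeping on $\|\cdot\|_\varphi$.
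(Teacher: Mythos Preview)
Your argument is exactly the one the paper intends (it explicitly omits the details, saying only ``with a similar argument''): reduce via Lemma \ref{lem:inclusion}, exploit closedness from Theorem \ref{thm:summableclosed} to work with a countable dense $L\subseteq \Lambda_x(\mathcal{I}_f)$ directly (no $q$-stratification needed), and for each $\ell\in L$ and $\omega\in\Theta_\ell$ push the witnessing index set through the two qualitative clauses of thinnability in place of the quantitative bounds \ref{item2}--\ref{item3}.

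There is one mislabeled citation to fix. When you conclude $(A_\ell)_K=A_\ell\cap B\notin\mathcal{I}_f$, this is \emph{not} the weak-thinnability clause of Definition \ref{def:thinn0}: weak thinnability says $A_B\notin\mathcal{I}$ when $A$ has positive asymptotic density and $B\notin\mathcal{I}$, but here the outer set $A_\ell$ need not have positive density (and you have not argued that $K\notin\mathcal{I}_f$). What you actually use is the additional thinnability clause ``$B_A\notin\mathcal{I}$'' with $B=A_\ell\notin\mathcal{I}_f$ and $A=K$ of asymptotic density $\nicefrac{1}{2}$, which gives $(A_\ell)_K\notin\mathcal{I}_f$ as desired. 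With that correction the proof is complete; your verification that $C\le A_\ell\cap B$ (via $m_j\ge j$) and the subsequent application of the monotonicity clause are both fine.
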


We conclude with the relationship between ideal limit points and ideal cluster points of subsequences of a given sequence. Given an ideal $\mathcal{I}$ and a sequence $x=(x_n)$ taking values in a topological space, recall that $\ell$ is a $\mathcal{I}$-cluster point of $(x_n)$ provided that $\{n: x_n \in U\}\notin \mathcal{I}$ for all neighborhoods $U$ of $\ell$. Denoting by $\Gamma_x(\mathcal{I})$ the set of $\mathcal{I}$-cluster points of $(x_n)$, we obtain:
\begin{cor}\label{cor:final}
Let $x$ be a sequence taking values in a separable metric space and let $\mathcal{I}$ be a thinnable summable ideal or a strongly thinnable analytic P-ideal. Then 
$$
\lambda\left(\left\{\omega \in (0,1]: \Lambda_{x\upharpoonright \omega}(\mathcal{I})=\Gamma_{x\upharpoonright \omega}(\mathcal{I})\right\}\right)
$$
is either $0$ or $1$. In addition, it is $1$ if and only if $\Lambda_{x}(\mathcal{I})=\Gamma_{x}(\mathcal{I})$.
\end{cor}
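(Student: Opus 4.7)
The plan is to combine the invariance of ideal limit points (Theorem \ref{thm:mainnew1} or Theorem \ref{thm:mainnew2}) with the analogous invariance result for ideal cluster points proved in \cite{Leo17}. Specifically, under the hypotheses on $\mathcal{I}$, both of the following sets belong to $\mathscr{M}$ and have Lebesgue measure $1$:
\begin{displaymath}
E_1:=\left\{\omega \in (0,1]: \Lambda_{x\upharpoonright\omega}(\mathcal{I})=\Lambda_x(\mathcal{I})\right\}\,\,\text{ and }\,\,E_2:=\left\{\omega \in (0,1]: \Gamma_{x\upharpoonright\omega}(\mathcal{I})=\Gamma_x(\mathcal{I})\right\}.
\end{displaymath}
Hence $E:=E_1\cap E_2$ satisfies $\lambda(E)=1$.

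Next, I would observe that for every $\omega \in E$ the equality $\Lambda_{x\upharpoonright\omega}(\mathcal{I})=\Gamma_{x\upharpoonright\omega}(\mathcal{I})$ is equivalent, by substitution, to $\Lambda_x(\mathcal{I})=\Gamma_x(\mathcal{I})$. The right-hand side does not depend on $\omega$, so a clean dichotomy emerges: either $E$ is entirely contained in the set
\begin{displaymath}
S:=\left\{\omega\in (0,1]: \Lambda_{x\upharpoonright\omega}(\mathcal{I})=\Gamma_{x\upharpoonright\omega}(\mathcal{I})\right\},
\end{displaymath}
or $E\cap S=\emptyset$. In the former case $S^c\subseteq E^c$ has outer measure $0$, so $S$ is Lebesgue measurable with $\lambda(S)=1$; in the latter, $S\subseteq E^c$ forces $\lambda(S)=0$. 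This immediately yields both the $\{0,1\}$-valued conclusion and the characterization: $\lambda(S)=1$ holds precisely when the first alternative occurs, i.e. precisely when $\Lambda_x(\mathcal{I})=\Gamma_x(\mathcal{I})$.

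The only nontrivial ingredient is the full-measure statement $\lambda(E_2)=1$, which is supplied by \cite[Theorem 3.1]{Leo17} in the analytic P-ideal case and by the summable counterpart therein; the corresponding statement for limit points is Theorem \ref{thm:mainnew1} or Theorem \ref{thm:mainnew2} of the present paper. The main (mild) obstacle is ensuring that the two invariance theorems cited apply simultaneously under the stated hypotheses on $\mathcal{I}$; once this is checked, the argument collapses to the set-theoretic observation on $E_1\cap E_2$ described above, and measurability of $S$ is automatic because $S$ differs from a union of full-measure or null sets by a Lebesgue-null set.
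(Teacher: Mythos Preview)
Your proposal is correct and follows essentially the same route as the paper's own proof: intersect the full-measure sets coming from the limit-point invariance (Theorems \ref{thm:mainnew1}/\ref{thm:mainnew2}) and the cluster-point invariance \cite[Theorem 3.1]{Leo17}, then observe that on this intersection the subsequence equality reduces to the fixed equality $\Lambda_x(\mathcal{I})=\Gamma_x(\mathcal{I})$. The paper states this in a single line; your version merely spells out the measurability of $S$ and the dichotomy more carefully.
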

\begin{proof}
Thanks to Theorem \ref{thm:mainnew1}, Theorem \ref{thm:mainnew2}, and \cite[Theorem 3.1]{Leo17}, it holds
$$
\lambda\left(\left\{\omega \in (0,1]: \Lambda_{x\upharpoonright \omega}(\mathcal{I})=\Lambda_x(\mathcal{I}) \,\text{ and }\,\Gamma_{x\upharpoonright \omega}(\mathcal{I})=\Gamma_x(\mathcal{I})\right\}\right)=1,
$$
which is sufficient to prove the claim.
\end{proof}

%%%%%%%%%%%%%%%%%%%%%%%%%%%%%%%%%%%%%%%%%%%%%%%%
\subsection*{Acknowledgments} 
The author is grateful to Marek Balcerzak (\L{}\'{o}d\'{z} University of Technology, PL) and the anonymous referee for several useful comments.% and for pointing out reference \cite{MR2923430}.

\subsection*{Note added in proof} Theorems \ref{thm:Fsigma} and \ref{thm:summableclosed} hold also in first countable Hausdorff spaces, see \cite[Section 2]{PaoloMarek17}. Moreover, it turns out that the topological analogue of Theorem \ref{thm:mainnew1} is quite different, providing a non-analogue between measure and category. Indeed, it has been shown in \cite{Leo17c}, cf. also \cite{LMM}, that, if $x$ is a sequence in a separable metric space, then $\{\omega \in (0,1]: \Lambda_x(\mathcal{I}_0)=\Lambda_{x \upharpoonright \omega}(\mathcal{I}_0)\}$ is not a first Baire category set if and only if every ordinary limit point of $x$ is also a statistical limit point of $x$.%, that is, $\Lambda_x(\mathrm{Fin})=\Lambda_x(\mathcal{I}_0)$.

%%%%%%%%%%%%%%%%%%%%%%%%%%%%%%%%%%%%%%%%%%%%%%%%%%%%%%%%%%%%%%%%%%%%%%%
%\nocite{*}
\bibliographystyle{amsplain}
\bibliography{ideale}

\providecommand{\MR}[1]{}
\providecommand{\bysame}{\leavevmode\hbox to3em{\hrulefill}\thinspace}
\providecommand{\MR}{\relax\ifhmode\unskip\space\fi MR }
% \MRhref is called by the amsart/book/proc definition of \MR.
\providecommand{\MRhref}[2]{%
  \href{http://www.ams.org/mathscinet-getitem?mr=#1}{#2}
}
\providecommand{\href}[2]{#2}
\begin{thebibliography}{10}

\bibitem{MR2285579}
M.~Balcerzak, K.~Dems, and A.~Komisarski, \emph{Statistical convergence and
  ideal convergence for sequences of functions}, J. Math. Anal. Appl.
  \textbf{328} (2007), no.~1, 715--729. \MR{2285579}

\bibitem{MR3568092}
M.~Balcerzak, Sz. G\l{a}b, and A.~Wachowicz, \emph{Qualitative properties of
  ideal convergent subsequences and rearrangements}, Acta Math. Hungar.
  \textbf{150} (2016), no.~2, 312--323. \MR{3568092}

\bibitem{PaoloMarek17}
M.~Balcerzak and P.~Leonetti, \emph{On the relationship between ideal cluster
  points and ideal limit points}, Topology Appl., to appear.

\bibitem{MR1324786}
P.~Billingsley, \emph{Probability and measure}, third ed., Wiley Series in
  Probability and Mathematical Statistics, John Wiley \& Sons, Inc., New York,
  1995, A Wiley-Interscience Publication. \MR{1324786}

\bibitem{MR0009997}
R.~C. Buck, \emph{Limit points of subsequences}, Bull. Amer. Math. Soc.
  \textbf{50} (1944), 395--397. \MR{0009997}

\bibitem{MR1372186}
J.~Connor and J.~Kline, \emph{On statistical limit points and the consistency
  of statistical convergence}, J. Math. Anal. Appl. \textbf{197} (1996), no.~2,
  392--399. \MR{1372186}

\bibitem{MR2923430}
P.~Das, \emph{Some further results on ideal convergence in topological spaces},
  Topology Appl. \textbf{159} (2012), no.~10-11, 2621--2626. \MR{2923430}

\bibitem{MR0316930}
D.~F. Dawson, \emph{Summability of subsequences and stretchings of sequences},
  Pacific J. Math. \textbf{44} (1973), 455--460. \MR{0316930}

\bibitem{MR2463821}
G.~Di~Maio and L.~D.~R. Ko\v{c}inac, \emph{Statistical convergence in
  topology}, Topology Appl. \textbf{156} (2008), no.~1, 28--45. \MR{2463821}

\bibitem{MR1711328}
I.~Farah, \emph{Analytic quotients: theory of liftings for quotients over
  analytic ideals on the integers}, Mem. Amer. Math. Soc. \textbf{148} (2000),
  no.~702, xvi+177. \MR{1711328}

\bibitem{MR1181163}
J.~A. Fridy, \emph{Statistical limit points}, Proc. Amer. Math. Soc.
  \textbf{118} (1993), no.~4, 1187--1192. \MR{1181163}

\bibitem{MR1416085}
J.~A. Fridy and C.~Orhan, \emph{Statistical limit superior and limit inferior},
  Proc. Amer. Math. Soc. \textbf{125} (1997), no.~12, 3625--3631. \MR{1416085}

\bibitem{MR1838788}
P.~Kostyrko, M.~Ma\v{c}aj, T.~\v{S}al\'{a}t, and O.~Strauch, \emph{On
  statistical limit points}, Proc. Amer. Math. Soc. \textbf{129} (2001), no.~9,
  2647--2654. \MR{1838788}

\bibitem{Leo17c}
P.~Leonetti, \emph{Limit points of subsequences}, preprint, last updated: Dec
  31, 2017 (\href{http://arxiv.org/abs/1801.00343}{arXiv:1801.00343}).

\bibitem{Leo17}
\bysame, \emph{Thinnable ideals and invariance of cluster points}, Rocky
  Mountain J. Math. \textbf{48} (2018), no.~6, 1951--1961.

\bibitem{LMM}
P.~Leonetti, H.~Miller, and L.~Miller-Van~Wieren, \emph{Duality between measure
  and category of almost all subsequences of a given sequence}, Period. Math.
  Hungar., to appear.

\bibitem{LeoTri}
P.~Leonetti and S.~Tringali, \emph{On the notions of upper and lower density},
  preprint, last updated: Jan 20, 2017
  (\href{http://arxiv.org/abs/1506.04664}{arXiv:1506.04664}).

\bibitem{MR1260176}
H.~I. Miller, \emph{A measure theoretical subsequence characterization of
  statistical convergence}, Trans. Amer. Math. Soc. \textbf{347} (1995), no.~5,
  1811--1819. \MR{1260176}

\bibitem{MR1924673}
H.~I. Miller and C.~Orhan, \emph{On almost convergent and statistically
  convergent subsequences}, Acta Math. Hungar. \textbf{93} (2001), no.~1-2,
  135--151. \MR{1924673}

\end{thebibliography}

\end{document}